\newtheorem{theorem}{Theorem}[section]
\newtheorem{proposition}{Proposition}[section]
\newtheorem{lemma}{Lemma}[section]
\newtheorem{definition}{Definition}[section]
\newtheorem{remark}{Remark}[section]
\newcommand{\tr}{{\rm Tr\hskip -0.25em}~}
\begin{document}

\title[Operator maps of Jensen-type]{Operator maps of Jensen-type}

\author[F. Hansen, H. Najafi, M.S. Moslehian]{Frank Hansen$^1$, Mohammad Sal Moslehian$^2$ \MakeLowercase{and} Hamed Najafi$^2$}

\address{\footnotesize Institute for Excellence in Higher Education, Tohoku University, Japan}
\email{frank.hansen@m.tohoku.ac.jp}
\address{Department of Pure Mathematics, Center of Excellence in Analysis on Algebraic Structures (CEAAS), Ferdowsi University of
Mashhad, P.O. Box 1159, Mashhad 91775, Iran}
\email{hamednajafi20@gmail.com} \email{moslehian@um.ac.ir
and moslehian@member.ams.org}

\subjclass[2010]{Primary 47A63; Secondary 47B10, 47A30.}

\keywords{Jensen's operator inequality; convex operator function.}

\maketitle

\begin{abstract}
Let $\mathbb{B}_J(\mathcal H)$ denote the set of self-adjoint operators acting on a Hilbert space $\mathcal{H}$ with spectra contained in an open interval $J$.  A map $\Phi\colon\mathbb{B}_J(\mathcal H)\to {\mathbb B}(\mathcal H)_\text{sa} $ is said to be of Jensen-type if
\[
\Phi(C^*AC+D^*BD)\le C^*\Phi(A)C+D^*\Phi(B)D
\]
for all $ A, B \in B_J(\mathcal H)$ and bounded linear operators $ C,D $ acting on $ \mathcal H $ with
$ C^*C+D^*D=I$, where $I$ denotes the identity operator. We show that a Jensen-type map on a infinite dimensional Hilbert space is of the form
$\Phi(A)=f(A)$ for some operator convex function $ f $ defined in $ J.$
\end{abstract}

\section{Introduction}

We recall that a function $f\colon J \rightarrow \mathbb{R}$ defined in a real interval $ J $ is said to be $n$-convex if the inequality
\begin{equation}\label{5}
f (\lambda A + (1-\lambda)B) \leq \lambda f (A) + (1-\lambda)f (B)
\end{equation}
holds for all $\lambda \in [0, 1]$ and operators $A, B\in{\mathbb B}_J({\mathcal H})$, when $ \dim\mathcal{H}=n. $ More generally, we call $f$ operator convex if the inequality \eqref{5} holds for all natural numbers $ n $. It is known that the inequality in this case also holds for operators on an infinite dimensional Hilbert space.
Hansen and Pedersen \cite{HP,HP2} obtained the following characterization of operator convexity.

\begin{theorem}\label{HP}
Let $f\colon J\to\mathbb{R}$ be a continuous function defined in an interval $J,$ and let $\mathcal{H}$ be an infinite dimensional Hilbert space. The following conditions are equivalent:
\begin{enumerate}[(i)]

\item f is operator convex.

\item For each natural number $k$ the inequality
\[
f\left(\sum_{i=1}^{k}C_i^{*}A_i C_i\right)\leq \sum_{i=1}^{k}C_i^{*}f(A_i)C_i
\]
holds for all $ A_1,\ldots,A_k \in B_J(\mathcal{H})$ and arbitrary operators $C_1,\ldots,C_k $ on $ \mathcal H $ with $ C_1^{*}C_1+\cdots+C_k^*C_k=I.$

\item For each natural number $k$ the inequality
\[
f\left(\sum_{i=1}^{k}P_iA_i P_i\right)\leq \sum_{i=1}^{k}P_i f(A_i)P_i
\]
\end{enumerate}
holds for all $ A_1,\ldots,A_k \in B_J(\mathcal{H})$ and projections $P_1,\ldots,P_k $ on $ \mathcal H $ with sum $ P_1+\cdots+P_k=I.$
\end{theorem}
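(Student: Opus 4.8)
The plan is to close the cycle $(i)\Rightarrow(ii)\Rightarrow(iii)\Rightarrow(i)$. The implication $(ii)\Rightarrow(iii)$ costs nothing: if $P_1,\dots,P_k$ are projections with $P_1+\cdots+P_k=I$, then each $P_i$ is self-adjoint and idempotent, so $\sum_iP_i^{*}P_i=\sum_iP_i^{2}=\sum_iP_i=I$, and hence $(iii)$ is literally the special case $C_i=P_i$ of $(ii)$.

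For $(iii)\Rightarrow(i)$ I would recover operator convexity on $\mathcal H$ by a rotation on $\mathcal H\oplus\mathcal H$. Given $A,B\in\mathbb B_J(\mathcal H)$ and $\lambda\in[0,1]$, put $X=\mathrm{diag}(A,B)$ and let $U=\left(\begin{smallmatrix}\sqrt{\lambda}\,I & -\sqrt{1-\lambda}\,I\\ \sqrt{1-\lambda}\,I & \sqrt{\lambda}\,I\end{smallmatrix}\right)$, which is unitary. A direct computation shows that the $(1,1)$-corner of $U^{*}XU$ is $\lambda A+(1-\lambda)B$, and since $f(U^{*}XU)=U^{*}f(X)U$ with $f(X)=\mathrm{diag}(f(A),f(B))$, the $(1,1)$-corner of $f(U^{*}XU)$ is $\lambda f(A)+(1-\lambda)f(B)$. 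I would then apply $(iii)$ with $k=2$, $A_1=U^{*}XU$, $A_2=cI$ for an arbitrary $c\in J$, and the complementary projections $P_1=\mathrm{diag}(I,0)$, $P_2=\mathrm{diag}(0,I)$. Both $P_1A_1P_1+P_2A_2P_2=\mathrm{diag}(\lambda A+(1-\lambda)B,\,cI)$ and the right-hand side are block diagonal (with spectra in $J$, since $J$ is an interval), so reading off the $(1,1)$-blocks of the resulting inequality gives exactly $f(\lambda A+(1-\lambda)B)\le\lambda f(A)+(1-\lambda)f(B)$.

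The main work, and the step I expect to be the principal obstacle, is $(i)\Rightarrow(ii)$, i.e.\ Jensen's operator inequality. First I would reduce $(ii)$ to the single-isometry inequality $f(V^{*}AV)\le V^{*}f(A)V$: stacking $C_1,\dots,C_k$ into a column $C=(C_1,\dots,C_k)^{t}\colon\mathcal H\to\mathcal H^{k}$ gives $C^{*}C=\sum_iC_i^{*}C_i=I$, so $C$ is an isometry, while $C^{*}\,\mathrm{diag}(A_1,\dots,A_k)\,C=\sum_iC_i^{*}A_iC_i$ and similarly with $f$; identifying $\mathcal H^{k}\cong\mathcal H$ (this is where the infinite-dimensionality of $\mathcal H$ enters) turns $(ii)$ into the isometry statement on $\mathcal H$. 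To prove the latter from operator convexity I would dilate the isometry $V$ to the unitary $U=\left(\begin{smallmatrix}V & Q\\ 0 & -V^{*}\end{smallmatrix}\right)$ on $\mathcal H\oplus\mathcal H$, where $Q=I-VV^{*}$, and introduce the symmetry $S=\mathrm{diag}(I,-I)$. Taking $\tilde A=\mathrm{diag}(A,cI)$, one checks that $U^{*}\tilde AU$ and $S(U^{*}\tilde AU)S$ share the $(1,1)$-corner $V^{*}AV$ but have opposite off-diagonal blocks, so their average is block diagonal with $(1,1)$-corner $V^{*}AV$; applying the $\lambda=\tfrac12$ operator convexity inequality to this average, and using $f(U^{*}\tilde AU)=U^{*}f(\tilde A)U$ together with $f(S\,\cdot\,S)=S\,f(\cdot)\,S$, yields the claim after comparing $(1,1)$-corners. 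The delicate points to verify are that every operator to which $f$ is applied again has spectrum in $J$ (the relevant blocks are compressions or unitary copies of $A$ and $cI$) and that the convexity inequality may be used on the infinite-dimensional space $\mathcal H\oplus\mathcal H$, which is guaranteed by the remark following the definition of operator convexity.
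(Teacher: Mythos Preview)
The paper does not give its own proof of this theorem: it is stated in the introduction as background and attributed to Hansen and Pedersen \cite{HP,HP2}, so there is no in-paper argument to compare against.

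That said, your proof is essentially the original Hansen--Pedersen argument and is correct. The reduction $(ii)\Rightarrow(iii)$ is trivial as you note; the rotation/averaging trick on $\mathcal H\oplus\mathcal H$ is exactly the standard route both for $(iii)\Rightarrow(i)$ and for the isometry inequality underlying $(i)\Rightarrow(ii)$. Two small points worth making explicit in a final write-up: in $(iii)\Rightarrow(i)$ you are tacitly using the same identification $\mathcal H\oplus\mathcal H\cong\mathcal H$ that you flag later (otherwise $(iii)$ as stated only speaks about projections on $\mathcal H$); and in the dilation step you should record that the $(2,2)$-block of the averaged operator is $QAQ+cVV^{*}$, which indeed has spectrum in $J$ since $Q$ and $VV^{*}$ are complementary projections and each summand is a compression of an operator with spectrum in the interval $J$. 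With those checks spelled out, the argument is complete.
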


One should note that the above result only holds on a Hilbert space of infinite dimensions. If $ \mathcal H $ is of finite dimension $ n, $ then $ f $ should be $ 2n $-convex for statement $ (ii) $ to hold with $k=2$.

There are other equivalent conditions of operator convexity, cf. \cite{AFA, 1}.
We want to study operator maps $\Phi $ given on the form $ \Phi(A)=f(A) $ in a more abstract setting, where $f(A)$ is defined by the functional calculus, and determine which general properties of $ \Phi $ that entail this particular form. A related problem is to place maps of the said form, $ \Phi(A)=f(A), $ in the context of other more general types of operator maps. To this end we first
 introduce the notion of a Jensen-type map.

\begin{definition}\label{definition of Jensen-type}
Let $ J $ be an open real interval, and let $ \mathcal H $ be a Hilbert space.
A (not necessarily linear) map $ \Phi:\mathbb{B}_J(\mathcal{H})\rightarrow \mathbb{B}(\mathcal{H})_{\text{sa}} $ is said to be of Jensen-type if
\begin{equation}\label{definition of Jensen-type map}
\Phi(C^*AC+D^*BD)\le C^*\Phi(A)C+D^*\Phi(B)D
\end{equation}
for all $ A, B \in B_J(\mathcal{H})$ and operators $ C,D $ on $ \mathcal H $  with $C^*C+D^*D=I$.
\end{definition}

Note that a Jensen-type map is convex. It is also unitarily invariant. Indeed, by choosing $ C $ as a unitary $ U $ and setting $ D=0 $, we obtain the inequality
\[
\Phi(U^*AU)\le U^*\Phi(A)U=U^*\Phi(UU^*AUU^*)U\le \Phi(U^*AU),
\]
implying that $ \Phi(U^*AU)=\Phi(U)$. We later realize that there exist unitarily invariant convex operator maps that are not of Jensen-type.

Robertson and Smith \cite{RS} showed that if $E$ is an operator system (i.e.  a closed $*$-subspace of a unital $C^*$-algebra containing the identity), $B$ is a $C^*$-algebra and a linear map $\Psi:E\otimes \mathbb{M}_n\to B\otimes\mathbb{M}_n$ satisfies $\Psi(U^*XU)=U^*\Psi(X)U$ for all $X\in E\otimes \mathbb{M}_n$ and all unitaries $U\in\mathbb{M}_n$, then there exist $\phi, \lambda: E\to B$ such that
\[
\Psi(X)=(\phi\otimes {\rm id}_n)(X) +\lambda(\tr X)\otimes I_n
\]
for all $X\in E\otimes \mathbb{M}_n$, where $I_n$ is the identity in the $C^*$-algebra $\mathbb{M}_n$ of all complex $n\times n$ matrices. In addition, Bhat \cite{BHA} proved that any bounded unitarily invariant linear map $\alpha :{\mathcal B}({\mathcal H})\to {\mathcal B}({\mathcal H})$ is of the form
\[
\alpha (X)= cX+d ~\tr X \cdot I
\]
for some $c, d\in \mathbb{C}$ if $\mathcal H$ is finite dimensional, and of the form
\[
\alpha (X)=cX
\]
for some $c\in\mathbb{C}$ if $\mathcal H$ is infinite dimensional.

\section{Unitarily invariant convex operator maps}

Let $ \Phi\colon B_J(\mathcal H)\to B(\mathcal H) $ be a unitarily invariant (not necessarily linear) map.

\begin{lemma}\label{commutation lemma}
If operators $ X\in B_J(\mathcal H) $ and $ Y\in  B(\mathcal H) $ commute, then so do $ \Phi(X) $ and $ Y. $
\end{lemma}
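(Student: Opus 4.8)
The plan is to exploit the unitary invariance of $\Phi$ together with the fact that the commutant of a self-adjoint operator is a von Neumann algebra, hence is linearly spanned by the unitaries it contains. So the lemma is really just a repackaging of unitary invariance, and I do not expect a serious obstacle; the only non-elementary input is the classical structure fact about von Neumann algebras.

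First I would record the basic consequence of unitary invariance. If $U$ is a unitary operator commuting with $X$, then $U^*XU=X$, and therefore
\[
U^*\Phi(X)U=\Phi(U^*XU)=\Phi(X),
\]
which says exactly that $\Phi(X)$ commutes with $U$. Hence $\Phi(X)$ commutes with every unitary operator lying in the commutant $\{X\}'$.

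Next I would invoke the structure of $\{X\}'$. Since $X$ is self-adjoint, $\{X\}'$ is a von Neumann algebra, and every element of a von Neumann algebra is a finite linear combination of unitaries belonging to it: a self-adjoint contraction $A\in\{X\}'$ can be written as $\frac{1}{2}\bigl(V+V^*\bigr)$ with $V=A+i(I-A^2)^{1/2}\in\{X\}'$ unitary (continuous functional calculus keeps us inside $\{X\}'$), and a general element is reduced to this case via its real and imaginary parts and a rescaling. Combining this with the previous step shows that $\Phi(X)$ commutes with all of $\{X\}'$.

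Finally, if $Y\in B(\mathcal H)$ commutes with $X$, then $Y\in\{X\}'$, so by the previous paragraph $\Phi(X)$ commutes with $Y$, which is precisely the claim. If one prefers to avoid quoting the spanning result, the same effect is obtained by observing that both $\operatorname{Re}Y=\frac{1}{2}(Y+Y^*)$ and $\operatorname{Im}Y=\frac{1}{2i}(Y-Y^*)$ lie in $\{X\}'$, writing each (after rescaling) as an average of two unitaries in $\{X\}'$ as above, and using the first step on each of these at most four unitaries.
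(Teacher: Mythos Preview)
Your proof is correct and follows essentially the same route as the paper's: both first use unitary invariance to see that $\Phi(X)$ commutes with every unitary in $\{X\}'$, and then use that $\{X\}'$ is a von Neumann algebra spanned by its unitaries to conclude $\Phi(X)\in\{X\}''$ and hence commutes with $Y\in\{X\}'$. The paper simply compresses your middle step into the phrase ``$\Phi(X)$ is contained in the abelian double commutant $\{X\}''$,'' whereas you spell out the unitary-spanning argument explicitly.
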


\begin{proof}
For any unitary operator $ U $ commuting with $ X $ we have
\[
\Phi(X)=\Phi(U^*XU)=U^*\Phi(X)U.
\]
Therefore, $ U\Phi(X)=\Phi(X)U$ and $ \Phi(X) $ is thus contained in the abelian double commutant $\{X\}''$. Since $ Y\in\{X\}' $ it follows that $ \Phi(X) $ and $ Y $ commute.
\end{proof}

In particular, if $ X=t\cdot I $ is a multiple of the identity operator for some $ t\in J, $ then we deduce that $ \Phi(t\cdot I) $ commutes with every operator in $\mathbb{B}(\mathcal{H}).$ It is therefore of the form
\begin{equation}\label{representing function}
\Phi(t\cdot I)=f(t)\cdot I, \qquad t\in J
\end{equation}
for some function $ f\colon J\to\mathbb{R}. $ We realize that $ f $ is convex if $ \Phi $ is convex.

\begin{lemma}\label{main lemma}
Let $ P_1,\dots, P_k$ be projections with $P_1+\cdots+P_k=I $ and put
\[
U=\theta P_1+\theta^2 P_2+\cdots+\theta^{k-1} P_{k-1}+P_k,
\]
where $ \theta=\exp(2\pi i/k)$ is a $k$th root of unity. Then $ U $ is unitary and
\[
\sum_{j=1}^k P_j X P_j = \frac{1}{k}\sum_{j=1}^k U^{-j} X U^j
\]
for any $ X\in \mathbb{B}(\mathcal{H}).$
\end{lemma}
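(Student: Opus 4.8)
The plan is to first record the standard fact that projections summing to the identity are automatically pairwise orthogonal, then to diagonalize every power of $U$ along this orthogonal decomposition, and finally to reduce the asserted identity to the elementary orthogonality relation for $k$th roots of unity.

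First I would observe that since $P_1,\dots,P_k$ are projections with $P_1+\cdots+P_k=I$, they are mutually orthogonal: multiplying $I=\sum_j P_j$ on both sides by $P_i$ yields $P_i=P_i+\sum_{j\ne i}P_iP_jP_i$, so $\sum_{j\ne i}P_iP_jP_i=0$; each summand $P_iP_jP_i=(P_jP_i)^*(P_jP_i)$ is a positive operator, hence vanishes, which forces $P_jP_i=0$ for $j\ne i$. Writing $U=\sum_{j=1}^k\theta^j P_j$ (with $\theta^k=1$, so that the coefficient of $P_k$ is $1$), the orthogonality and idempotency of the $P_j$ give at once $U^m=\sum_{j=1}^k\theta^{jm}P_j$ for every integer $m$; in particular $U^*=\sum_j\theta^{-j}P_j=U^{-1}$ and $U^*U=UU^*=\sum_jP_j=I$, so $U$ is unitary.

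Next I would expand the right-hand side using this diagonal form:
\[
\frac1k\sum_{m=1}^kU^{-m}XU^m=\frac1k\sum_{m=1}^k\Bigl(\sum_i\theta^{-im}P_i\Bigr)X\Bigl(\sum_j\theta^{jm}P_j\Bigr)=\sum_{i,j}\Bigl(\frac1k\sum_{m=1}^k\theta^{(j-i)m}\Bigr)P_iXP_j .
\]
It then remains only to evaluate the inner scalar sum: when $i=j$ every term equals $1$, so the average is $1$; when $i\ne j$ we have $1\le|i-j|\le k-1$, hence $\theta^{\,j-i}\ne1$, and the geometric sum $\sum_{m=1}^k\theta^{(j-i)m}$ vanishes because $\theta^{(j-i)k}=1$. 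Thus only the diagonal terms survive and the expression collapses to $\sum_j P_jXP_j$, which is the claim.

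I do not expect a genuine obstacle here: the only step requiring a moment's care is the passage from ``a sum of projections equals $I$'' to ``the projections are pairwise orthogonal'', and once that is in hand everything reduces to the familiar character orthogonality for $\mathbb{Z}/k\mathbb{Z}$. It is worth noting that all the manipulations are purely algebraic and valid for an arbitrary bounded operator $X$ on any Hilbert space, finite- or infinite-dimensional, so no convergence considerations enter.
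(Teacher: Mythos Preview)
Your proof is correct and follows essentially the same route as the paper: both arguments reduce the identity to the elementary orthogonality relation $\sum_{m=1}^k\theta^{(j-i)m}=k\,\delta_{ij}$ for $k$th roots of unity. The only difference is cosmetic---the paper sandwiches the right-hand side between $P_r$ and $P_s$ to isolate the $(r,s)$-block, whereas you substitute the expansions $U^{\pm m}=\sum_j\theta^{\pm jm}P_j$ directly; you also spell out the pairwise orthogonality of the $P_j$ and the unitarity of $U$, which the paper leaves implicit.
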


\begin{proof}
Take $ r,s=1,\dots,k. $ By computation we obtain
\[
P_r\left(\sum_{j=1}^k U^{-j} X U^j\right)P_s=\sum_{j=1}^k \theta^{-jr}P_r X \theta^{js} P_s
=P_r X P_s\sum_{j=1}^k \theta^{j(s-r)} .
\]
The sum
\[
\sum_{j=1}^k \theta^{j(s-r)}=\sum_{j=1}^k \exp\Bigl(j(s-r)\frac{2\pi i}{k}\Bigr)=k
\]
for $ s=r. $ For $ s\ne r $ we set $ \omega=\exp\bigl((s-r)2\pi i/k\bigr) $ and obtain
\[
\sum_{j=1}^k \theta^{j(s-r)}=\sum_{j=1}^k \omega^j=\frac{\omega^{k+1}-\omega}{\omega-1}=0,
\]
since $ \omega\ne 1 $ and $ \omega^k=1. $ The assertion now follows.
\end{proof}

The following result is well-known for spectral functions, but it holds under the weaker conditions of only unitary invariance and convexity.

\begin{proposition}\label{main proposition}\label{main proposition}
Let $\mathcal{H}$ be a Hilbert space and $ \Phi\colon B_J(\mathcal H)\to B(\mathcal H)_\text{sa} $  a unitarily invariant convex map. Then
\[
\Phi\left(\sum_{j=1}^k P_jXP_j\right)\le\sum_{j=1}^k P_j \Phi(X) P_j
\]
for positive integers $ k $, operators $ X\in\mathbb{B}(\mathcal{H}),$ and projections $P_1, \dots, P_k  $ on $ \mathcal{H}$ with $ P_1+\dots + P_k=I. $
\end{proposition}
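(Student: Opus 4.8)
The plan is to apply Lemma \ref{main lemma} twice — once to $X$ and once to $\Phi(X)$ — with one use of convexity and one use of unitary invariance sandwiched in between. First I would let $U=\theta P_1+\theta^2P_2+\cdots+\theta^{k-1}P_{k-1}+P_k$ be the unitary furnished by Lemma \ref{main lemma}, so that
\[
\sum_{j=1}^k P_jXP_j=\frac1k\sum_{j=1}^k U^{-j}XU^j .
\]
Since each $U^{-j}XU^j$ is self-adjoint with spectrum equal to $\sigma(X)\subseteq J$, and the numerical range of any convex combination of these operators is contained in $[\min\sigma(X),\max\sigma(X)]\subseteq J$, every such combination — in particular $\sum_j P_jXP_j$ itself — lies in $\mathbb{B}_J(\mathcal H)$. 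Hence the left-hand side of the asserted inequality is well defined and $\Phi$ is defined at each intermediate step below.

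Next I would upgrade the two-point convexity of $\Phi$ to the equal-weight average of $k$ points by the standard induction, the previous remark guaranteeing that the partial convex combinations stay inside $\mathbb{B}_J(\mathcal H)$. This gives
\[
\Phi\Bigl(\sum_{j=1}^k P_jXP_j\Bigr)=\Phi\Bigl(\frac1k\sum_{j=1}^k U^{-j}XU^j\Bigr)\le\frac1k\sum_{j=1}^k\Phi\bigl(U^{-j}XU^j\bigr).
\]
I would then invoke unitary invariance in the form $\Phi(V^*YV)=V^*\Phi(Y)V$, applied with $V=U^j$, to rewrite $\Phi(U^{-j}XU^j)=U^{-j}\Phi(X)U^j$. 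Finally, applying Lemma \ref{main lemma} once more, now with the self-adjoint operator $\Phi(X)$ in place of $X$, I would recognize $\frac1k\sum_{j=1}^k U^{-j}\Phi(X)U^j=\sum_{j=1}^k P_j\Phi(X)P_j$, and concatenating the displays yields $\Phi(\sum_j P_jXP_j)\le\sum_j P_j\Phi(X)P_j$.

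I do not anticipate a genuine obstacle; the two points that need a word of justification are that every partial average stays in the domain $\mathbb{B}_J(\mathcal H)$ (handled by the numerical-range observation above) and that two-term convexity really does imply the $k$-term equal-weight Jensen inequality (a routine induction writing $\frac1k\sum_{j=1}^k A_j=\frac1kA_1+\frac{k-1}{k}\cdot\frac1{k-1}\sum_{j=2}^kA_j$). All the substance is carried by Lemma \ref{main lemma}, which turns the pinching map $X\mapsto\sum_j P_jXP_j$ into averaging over a cyclic unitary orbit — exactly the operation on which unitary invariance together with convexity bite.
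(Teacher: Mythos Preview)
Your argument is correct and is essentially the same as the paper's: apply Lemma~\ref{main lemma} to express the pinching as an average over the cyclic unitary orbit, use convexity, then unitary invariance, then Lemma~\ref{main lemma} again with $\Phi(X)$ in place of $X$. The only difference is that you make explicit two points the paper leaves implicit (that the intermediate convex combinations remain in $\mathbb{B}_J(\mathcal H)$ and that two-term convexity yields the $k$-term Jensen inequality), but the route is identical.
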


\begin{proof}
By repeated application of Lemma~\ref{main lemma} we obtain
\[
\begin{array}{l}
\displaystyle\Phi\left(\sum_{j=1}^k P_jXP_j\right)=\Phi\left(\frac{1}{k}\sum_{j=1}^k U^{-j} X U^j\right)
\le\frac{1}{k}\sum_{j=1}^k \Phi(U^{-j}XU^j)\\[3ex]
\displaystyle =\frac{1}{k}\sum_{j=1}^k U^{-j}\Phi(X) U^j=\sum_{j=1}^k P_j\Phi(X)P_j,
\end{array}
\]
where we used the unitary invariance and convexity of $ \Phi. $
\end{proof}

\begin{proposition}
Let $\mathcal{H}$ be a Hilbert space, and let $ \Phi\colon B_J(\mathcal H)\to B(\mathcal H)_\text{sa} $ be a unitarily invariant map. Then
\[
\Phi\left(\sum_{j=1}^k P_jXP_j\right)=\sum_{j=1}^k P_j \Phi\left(\sum_{i=1}^k P_iXP_i\right) P_j
\]
for $ X\in\mathbb{B}_J(\mathcal{H})_\text{sa}$ and projections $ P_1,\dots,P_k $ with $ P_1+\dots + P_k=I. $
 \end{proposition}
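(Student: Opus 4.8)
The plan is to reduce the identity to Lemma~\ref{commutation lemma}. Write $ Y:=\sum_{i=1}^k P_iXP_i $, so that the left-hand side is $ \Phi(Y) $ and the right-hand side is $ \sum_{j=1}^k P_j\Phi(Y)P_j $; thus the claim is precisely that $ \Phi(Y) $ is unchanged by the pinching associated with $ P_1,\dots,P_k $, equivalently that $ \Phi(Y) $ commutes with each $ P_j $. Before invoking the commutation lemma I would first check that $ Y $ again lies in $ \mathbb{B}_J(\mathcal H)_{\text{sa}} $, so that $ \Phi(Y) $ is defined: $ Y $ is self-adjoint, and if $ a $ and $ b $ denote the least and greatest points of the spectrum of $ X $, then $ aI\le X\le bI $ with $ [a,b]\subseteq J $ since $ J $ is an interval; applying the unital positive map $ Z\mapsto\sum_i P_iZP_i $ gives $ aI\le Y\le bI $, so the spectrum of $ Y $ is contained in $ [a,b]\subseteq J $.

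Next I would verify that each $ P_j $ commutes with $ Y $. Using the orthogonality relations $ P_jP_i=\delta_{ij}P_j $ one computes
\[
P_jY=\sum_{i=1}^k P_jP_iXP_i=P_jXP_j=\sum_{i=1}^k P_iXP_iP_j=YP_j .
\]
Hence $ Y\in\mathbb{B}_J(\mathcal H) $ and $ P_j\in\mathbb{B}(\mathcal H) $ commute, and Lemma~\ref{commutation lemma} (applied with $ Y $ in the role of the argument of $ \Phi $ and $ P_j $ in the role of the commuting operator) shows that $ \Phi(Y) $ and $ P_j $ commute, for every $ j=1,\dots,k $.

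Finally, since $ \Phi(Y) $ commutes with each $ P_j $ we have $ P_i\Phi(Y)P_j=\Phi(Y)P_iP_j=0 $ whenever $ i\ne j $, so using $ P_1+\dots+P_k=I $,
\[
\sum_{j=1}^k P_j\Phi(Y)P_j=\sum_{i,j=1}^k P_i\Phi(Y)P_j=\Bigl(\sum_{i=1}^k P_i\Bigr)\Phi(Y)\Bigl(\sum_{j=1}^k P_j\Bigr)=\Phi(Y),
\]
which is the asserted identity. I do not expect a real obstacle here; the proof is essentially an application of Lemma~\ref{commutation lemma} once one notices that a pinched operator is block diagonal. The only step that deserves a moment of care is the verification that $ Y $ stays inside $ \mathbb{B}_J(\mathcal H) $, so that $ \Phi $ may legitimately be applied to it — and, of course, that the statement uses only unitary invariance of $ \Phi $, not convexity.
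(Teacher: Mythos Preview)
Your proof is correct and follows exactly the same line as the paper: set $\tilde X=\sum_i P_iXP_i$, observe that $\tilde X$ commutes with each $P_j$, apply Lemma~\ref{commutation lemma} to deduce that $\Phi(\tilde X)$ also commutes with each $P_j$, and conclude that the pinching fixes $\Phi(\tilde X)$. You supply more detail than the paper does (in particular the check that $\tilde X\in\mathbb{B}_J(\mathcal H)$ and the explicit unwinding of the pinching identity), but the argument is identical in substance.
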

\begin{proof}
The projections $ P_1,\dots,P_k$ are necessarily mutually orthogonal and the sum
\[
\tilde X=\sum_{i=1}^k P_iXP_i
\]
commutes with $P_i $ for $ i=1,\dots,k. $ It then follows by Lemma~\ref{commutation lemma} that also
$Y=\Phi(\tilde X) $ commutes with every $P_i$ and the assertion follows.
\end{proof}

\section{The structure of Jensen-type maps}

Take an open real interval $ J, $ and let $ \Phi\colon B_J(\mathcal H)\to B(\mathcal H)_\text{sa} $ be a Jensen-type map. Since $ \Phi $ is unitarily invariant we learned in (\ref{representing function}) that $ \Phi( t\cdot I)=f(t)\cdot I, $  $ t\in J, $ for a function $ f\colon J\to\mathbb{R}. $  The convexity of $ \Phi $ implies that $ f $ is convex and thus continuous since $ J $ is open.

\begin{lemma}\label{main lemma 2}
Let  $ \Phi\colon B_J(\mathcal H)\to B(\mathcal H)_\text{sa} $ be a Jensen-type map. Then the following statements are true.

\begin{enumerate}[(i)]
\item  Let $ P $ be a projection on $ \mathcal H.$   The equality
\[
P\Phi\bigl(tP+(I-P)Y(I-P)\bigr)P=f(t)P, \qquad t\in J
\]
holds for any  $ Y\in B_J(\mathcal H). $

\item If $ \lambda $ is an eigenvalue of an operator $ X\in \mathbb{B}_J(\mathcal H) $ with corresponding eigenprojection $ P $, then
\[
P\Phi(X)P=P\Phi\bigl(\lambda P+(I-P)X(I-P)\bigr)P=f(\lambda)P.
\]

\end{enumerate}
\end{lemma}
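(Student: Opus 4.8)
The plan is to obtain both statements from just two applications of the Jensen-type inequality \eqref{definition of Jensen-type map}, each time with the pair $C=P$, $D=I-P$, which satisfies $C^{*}C+D^{*}D=I$. Throughout I write $Q=I-P$ and, for part (i), $X=tP+QYQ$. Before anything else I would check that $X\in\mathbb{B}_J(\mathcal H)$, so that $\Phi(X)$ makes sense: $X$ is self-adjoint and block diagonal with respect to $\mathcal H=P\mathcal H\oplus Q\mathcal H$, its $P$-corner being $t\cdot I$ with $t\in J$, and the numerical range of the $Q$-corner $QYQ$ restricted to $Q\mathcal H$ is contained in that of $Y$, hence in a compact subinterval of $J$.

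For the upper estimate in (i) I would apply \eqref{definition of Jensen-type map} with $A=t\cdot I$, $B=Y$, $C=P$, $D=Q$. The left-hand argument is $P(t\cdot I)P+QYQ=X$, and using $\Phi(t\cdot I)=f(t)\cdot I$ from \eqref{representing function} the inequality reads $\Phi(X)\le f(t)P+Q\Phi(Y)Q$. Compressing by $P$ and using $PQ=0$ gives $P\Phi(X)P\le f(t)P$. For the reverse estimate I would instead apply \eqref{definition of Jensen-type map} with the roles of the two slots swapped, namely $A=X$, $B=t\cdot I$, $C=P$, $D=Q$. The key observation is that $PXP=tP$, since the off-diagonal corners of $X$ and its $Q$-corner are annihilated by the compression; hence the left-hand argument is $PXP+Q(t\cdot I)Q=tP+tQ=t\cdot I$, and the inequality becomes $f(t)\cdot I=\Phi(t\cdot I)\le P\Phi(X)P+f(t)Q$. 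Rearranging, $f(t)P=f(t)(I-Q)\le P\Phi(X)P$. Combining the two estimates yields $P\Phi(X)P=f(t)P$, which is (i).

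Part (ii) is then essentially a special case. If $\lambda$ is an eigenvalue of $X\in\mathbb{B}_J(\mathcal H)$ with eigenprojection $P$, then $XP=PX=\lambda P$, so the off-diagonal corners of $X$ vanish and $X=\lambda P+QXQ$. Applying part (i) with $t=\lambda$ and $Y=X$ gives $P\Phi(\lambda P+QXQ)P=f(\lambda)P$, and since $\lambda P+QXQ=X$ this is precisely the asserted chain $P\Phi(X)P=P\Phi(\lambda P+(I-P)X(I-P))P=f(\lambda)P$.

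I do not anticipate a genuine obstacle: the only mild cleverness is recognizing that the single choice $C=P$, $D=I-P$ works in both directions once one swaps which argument of $\Phi$ carries $X$ and which carries $t\cdot I$. The remaining points merely require care, namely verifying that the operators fed to $\Phi$ lie in $\mathbb{B}_J(\mathcal H)$ and doing the bookkeeping with the corners $PQ=QP=0$; the degenerate cases $P=0$ and $P=I$ are covered by the same computation.
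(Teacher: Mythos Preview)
Your proof is correct and follows essentially the same approach as the paper: both arguments use the Jensen-type inequality twice with $C=P$, $D=I-P$, once with $(A,B)=(t\cdot I,Y)$ and once with $(A,B)=(X,t\cdot I)$. The paper packages these as a single chain $f(t)\le\cdots\le f(t)$ forcing equality throughout, whereas you extract upper and lower bounds separately; the content is identical, and your added verification that $tP+QYQ\in\mathbb{B}_J(\mathcal H)$ is a nice point the paper leaves implicit.
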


\begin{proof}
Since $ \Phi $ is of Jensen-type we obtain
\[
\begin{array}{rl}
\Phi\bigl(tP+(I-P)Y(I-P)\bigr)&\le P\Phi(t)P+(I-P)\Phi(Y)(I-P)\\[1ex]
&=f(t)P+(I-P)\Phi(Y)(I-P).
\end{array}
\]
Furthermore,
\[
\begin{array}{rl}
f(t)&=\Phi\bigl(P\bigl(tP+(I-P)Y(I-P)\bigr)P+(I-P)t(I-P)\bigr)\\[1ex]
&\le P\Phi\bigl(tP+(I-P)Y(I-P)\bigr)P+(I-P)\Phi(t)(I-P)\\[1ex]
&\le P\bigl(f(t)P+(I-P)\Phi(Y)(I-P)\bigr)P+f(t)(I-P)\\[1ex]
&=f(t)P+f(t)(I-P)=f(t).
\end{array}
\]
We therefore have the equality
\[
f(t)= P\Phi\bigl(tP+(I-P)Y(I-P)\bigr)P +f(t)(I-P)
\]
and thus
\[
P\Phi\bigl(tP+(I-P)Y(I-P)\bigr)P=f(t)P
\]
independent of  $ Y, $ which proves $ (i). $
Statement $ (ii) $ follows from the spectral theorem and $ (i). $
\end{proof}

\begin{theorem}\label{0}
Let $ J $ be an open real interval, and let $ \mathcal H $ be a Hilbert space of finite dimension $ n. $ If $ \Phi\colon B_J(\mathcal H)\to B(\mathcal H)_\text{sa} $ is of Jensen-type, then
\[
\Phi(A)=f(A) \qquad A\in B_J(\mathcal H),
\]
where $ f $ is the function defined in (\ref{representing function}). Furthermore, $ f $ is $ n $-convex.
\end{theorem}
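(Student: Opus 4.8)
The plan is to exploit the fact that in finite dimensions every operator in $B_J(\mathcal H)$ is a finite combination of its eigenprojections, and then to combine Lemma~\ref{commutation lemma} with Lemma~\ref{main lemma 2}(ii): the former forces $\Phi(A)$ to be block-diagonal with respect to the eigenspaces of $A$, while the latter computes each diagonal block explicitly. Together these pin $\Phi(A)$ down completely.

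In detail, given $A\in B_J(\mathcal H)$, I would first write the spectral decomposition $A=\sum_{i=1}^m\lambda_i P_i$, where $\lambda_1,\dots,\lambda_m\in J$ are the distinct eigenvalues and $P_1,\dots,P_m$ are the corresponding mutually orthogonal eigenprojections with $\sum_{i=1}^m P_i=I$; this finite complete family is the only place where finite-dimensionality is used. Since $A$ commutes with each $P_i$, Lemma~\ref{commutation lemma} yields that $\Phi(A)$ commutes with each $P_i$ as well, and because the $P_i$ are orthogonal and sum to $I$ this gives $P_i\Phi(A)P_j=0$ for $i\ne j$, hence $\Phi(A)=\sum_{i=1}^m P_i\Phi(A)P_i$. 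Applying Lemma~\ref{main lemma 2}(ii) with $X=A$, eigenvalue $\lambda_i$, and eigenprojection $P_i$ gives $P_i\Phi(A)P_i=f(\lambda_i)P_i$ for each $i$, and summing over $i$ produces $\Phi(A)=\sum_{i=1}^m f(\lambda_i)P_i=f(A)$ by the functional calculus, which is the asserted identity.

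For the final claim, I would use the observation (already noted after Definition~\ref{definition of Jensen-type}) that a Jensen-type map is convex: taking $C=\sqrt{\lambda}\,I$ and $D=\sqrt{1-\lambda}\,I$, which satisfy $C^*C+D^*D=I$, in the defining inequality gives $\Phi(\lambda A+(1-\lambda)B)\le\lambda\Phi(A)+(1-\lambda)\Phi(B)$ for all $A,B\in B_J(\mathcal H)$ and $\lambda\in[0,1]$; substituting the identity $\Phi=f(\cdot)$ just established turns this into $f(\lambda A+(1-\lambda)B)\le\lambda f(A)+(1-\lambda)f(B)$, i.e. $f$ is $n$-convex. I do not expect a genuine obstacle, since Lemma~\ref{main lemma 2} already carries the analytic content and what remains is the elementary fact that commutation with a complete orthogonal family of projections annihilates all off-diagonal blocks; the only thing worth stressing is that the argument truly needs finite dimensions, as it rests on $A$ being a finite sum of its eigenprojections and fails for operators with continuous spectrum, which is precisely why the infinite-dimensional statement (with the stronger conclusion that $f$ is operator convex) demands a separate, more delicate approximation argument.
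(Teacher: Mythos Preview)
Your proof is correct and follows essentially the same approach as the paper: both use the spectral decomposition of $A$, invoke Lemma~\ref{commutation lemma} to reduce $\Phi(A)$ to its block-diagonal form $\sum_i P_i\Phi(A)P_i$, and then apply Lemma~\ref{main lemma 2}(ii) to identify each block as $f(\lambda_i)P_i$. Your write-up is slightly more explicit about why commutation with the $P_i$ forces the off-diagonal blocks to vanish and about why $\Phi$ is convex, but the argument is the same.
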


\begin{proof} Let $ P_1,\dots,P_k $ be the spectral projections of $ X. $
By the spectral theorem and Lemma~\ref{main lemma 2}~$(ii)$ we obtain
\[
\Phi(X)=\sum_{i=1}^kP_i\Phi(X)=\sum_{i=1}^k P_i\Phi(X)P_i=\sum_{i=1}^k f(\lambda_i)P_i=f(X),
\]
where the second equality follows from Lemma~\ref{commutation lemma}. Since $ \Phi $ is convex, it follows that $ f $ is an $ n $-convex function.
\end{proof}

Note that to obtain Theorem~\ref{0} we only used that $ \Phi $ is unitarily invariant together with the inequality in (\ref{definition of Jensen-type map}) for projections $ C=P $ and $ D=I-P. $ However, to conclude that a map of the form $ \Phi(X)=f(X) $ is of Jensen-type, we need that $ f $ is $ 2n $-convex, where $ n $ is the dimension of the underlying Hilbert space.\\[1ex]
Note also that  even if when the underlying Hilbert space is infinite dimensional the proof of the preceding theorem implies that $ \Phi(A)=f(A) $ for any finite rank operator $ A\in B_J(\mathcal H). $

\begin{lemma}\label{lemma: non-commutative convex combination}
Let $ A , Y $ be self-adjoint operators on a Hilbert space with
\[
\alpha< A\le Y
\]
for some constant $ \alpha. $
Then there exist operators $ C $ and $ D $ such that
\[
A=C^*YC + \alpha D^*D
\]
and $ C^*C+D^*D=I. $
\end{lemma}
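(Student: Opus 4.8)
The plan is to construct $C$ and $D$ explicitly from the operator inequality $\alpha < A \le Y$. The key observation is that both $Y - \alpha I$ and $A - \alpha I$ are positive, with $Y - \alpha I$ bounded below (hence invertible, and its inverse square root is a bounded operator), so we may "solve" for $C$ in $A - \alpha I = C^*(Y-\alpha I)C$ by setting
\[
C = (Y-\alpha I)^{-1/2}(A-\alpha I)^{1/2}.
\]
Then $C^*(Y-\alpha I)C = (A-\alpha I)^{1/2}(Y-\alpha I)^{-1/2}(Y-\alpha I)(Y-\alpha I)^{-1/2}(A-\alpha I)^{1/2} = A - \alpha I$, so that $C^*YC = A - \alpha I + \alpha C^*C$, which rearranges to $A = C^*YC + \alpha(I - C^*C)$.

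Next I would check that $I - C^*C \ge 0$, so that $D$ can be taken to be its positive square root, $D = (I - C^*C)^{1/2}$, which immediately gives $C^*C + D^*D = I$ and the desired identity $A = C^*YC + \alpha D^*D$. To see $C^*C \le I$, compute
\[
C^*C = (A-\alpha I)^{1/2}(Y-\alpha I)^{-1}(A-\alpha I)^{1/2}.
\]
From $A \le Y$ we get $A - \alpha I \le Y - \alpha I$, and conjugating both sides by the positive operator $(Y-\alpha I)^{-1/2}$ yields $(Y-\alpha I)^{-1/2}(A-\alpha I)(Y-\alpha I)^{-1/2} \le I$. This is not literally $C^*C$, but $C^*C$ and this operator have the same spectrum (they are $T^*T$ and $TT^*$ for $T = (Y-\alpha I)^{-1/2}(A-\alpha I)^{1/2}$), and since both are self-adjoint the inequality $\le I$ transfers; alternatively one checks directly that $\|Cx\|^2 = \langle (A-\alpha I)^{1/2}x,\,(Y-\alpha I)^{-1}(A-\alpha I)^{1/2}x\rangle \le \|(A-\alpha I)^{1/2}x\|^2 \le \|x\|^2$ using $\langle (Y-\alpha I)^{-1}z,z\rangle \le \langle (Y-\alpha I)\cdot\text{(something)}\rangle$—cleanest is to write it via the substitution $z = (A-\alpha I)^{1/2}x$ and use $(Y-\alpha I)^{-1} \le (A - \alpha I)^{-1}$ when $A - \alpha I$ is also invertible, handling the general case by a limiting argument or by the $T^*T$/$TT^*$ spectral identity.

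The main obstacle is precisely this last point: the operator $C^*C$ that arises naturally is not the one that visibly satisfies $\le I$; they differ by the order of multiplication. I expect to handle this by invoking the fact that for any bounded operator $T$, the operators $T^*T$ and $TT^*$ have the same norm (indeed the same nonzero spectrum), so $\|C^*C\| = \|CC^*\| = \|(Y-\alpha I)^{-1/2}(A-\alpha I)(Y-\alpha I)^{-1/2}\| \le 1$; since $C^*C$ is positive this gives $C^*C \le I$. With $C^*C \le I$ established, setting $D = (I - C^*C)^{1/2}$ completes the construction, and the verification of $A = C^*YC + \alpha D^*D$ and $C^*C + D^*D = I$ is then the routine rearrangement indicated above.
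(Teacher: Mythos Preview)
Your construction is exactly the paper's: set $C=(Y-\alpha)^{-1/2}(A-\alpha)^{1/2}$, verify $A-\alpha=C^*(Y-\alpha)C$, then take $D=(I-C^*C)^{1/2}$. The paper simply asserts ``since $C^*C\le I$'' without argument; your final justification via $\|C^*C\|=\|CC^*\|$ together with $CC^*=(Y-\alpha)^{-1/2}(A-\alpha)(Y-\alpha)^{-1/2}\le I$ is correct and cleanly fills that gap, so you may drop the intermediate detours and present that line directly.
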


\begin{proof}
Since $ A-\alpha>0 $ we may set  $ C=(Y-\alpha)^{-1/2}(A-\alpha)^{1/2} $ and obtain
\[
A-\alpha=C^*(Y-\alpha)C=C^*YC-\alpha C^*C.
\]
Since $ C^*C\le I $ we may put $ D=(I-C^*C)^{1/2} $ and obtain
\[
A=C^*YC+\alpha D^*D
\]
and $ C^*C+D^*D=I. $
\end{proof}

\begin{theorem}
Let $ \mathcal H $ be an infinite dimensional Hilbert space, and let $ \Phi:\mathbb{B}_J(\mathcal H)\rightarrow \mathbb{B}(\mathcal H)_{\text{sa}} $  be a Jensen-type map. Then
\[
\Phi(A)=f(A)\qquad A\in B_J(\mathcal H)\,,
\]
where $ f $ is the function defined in (\ref{representing function}). In addition, $ f $ is operator convex.
\end{theorem}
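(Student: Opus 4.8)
The plan is to prove first that $\Phi(A)=f(A)$ for \emph{every} $A\in B_J(\mathcal H)$, and only afterwards to read off operator convexity of $f$ from the convexity of $\Phi$. The computation in the proof of Theorem~\ref{0} uses nothing about $\dim\mathcal H$ and hence already gives $\Phi(X)=f(X)$ for every $X\in B_J(\mathcal H)$ with finite spectrum: writing $X=\sum_{i=1}^{k}\lambda_iP_i$ with projections $P_i$ summing to $I$, one has $\Phi(X)=\sum_iP_i\Phi(X)P_i=\sum_i f(\lambda_i)P_i=f(X)$ by Lemmas~\ref{commutation lemma} and~\ref{main lemma 2}. So the whole task is to remove the finiteness hypothesis on the spectrum, and this is precisely where infinite dimensionality enters, through Lemma~\ref{lemma: non-commutative convex combination}: I will squeeze $A$ between finite-spectrum operators from above and from below and use the Jensen-type inequality in both directions.

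For the upper bound I would fix $A\in B_J(\mathcal H)$, choose a compact interval $[a,b]\subset J$ containing $\mathrm{sp}(A)$, a point $\alpha\in J$ with $\alpha<a$, and a point $b'\in J$ with $b<b'$. For small $\varepsilon>0$ pick a step function $g_\varepsilon\colon[a,b]\to J$ with finitely many values in $(\alpha,b']$ and $t\le g_\varepsilon(t)\le t+\varepsilon$, and put $Y=g_\varepsilon(A)$; then $Y$ has finite spectrum, lies in $B_J(\mathcal H)$, and $\alpha<A\le Y$, so Lemma~\ref{lemma: non-commutative convex combination} yields $C=(Y-\alpha)^{-1/2}(A-\alpha)^{1/2}$ and $D$ with $A=C^*YC+\alpha D^*D$ and $C^*C+D^*D=I$. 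Since $C=h(A)$ is a function of $A$ everything commutes, and taking $B=\alpha I$ in the Jensen-type inequality together with the finite-spectrum case gives
\[
\Phi(A)=\Phi\bigl(C^*YC+D^*(\alpha I)D\bigr)\le C^*f(Y)C+f(\alpha)D^*D=\phi_\varepsilon(A),
\]
where $\phi_\varepsilon(t)=\bigl((t-\alpha)f(g_\varepsilon(t))+(g_\varepsilon(t)-t)f(\alpha)\bigr)/(g_\varepsilon(t)-\alpha)$. Because $g_\varepsilon(t)\to t$ uniformly on $[a,b]$ while $g_\varepsilon(t)-\alpha\ge a-\alpha>0$, continuity of $f$ forces $\phi_\varepsilon\to f$ uniformly on $[a,b]$, hence $\phi_\varepsilon(A)\to f(A)$ in norm and $\Phi(A)\le f(A)$.

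The reverse inequality is the crux, since the defining inequality only bounds $\Phi$ from above; the device is to feed $A$ into the \emph{right-hand} slot. Keeping $\alpha,[a,b]$ as above, for small $\varepsilon>0$ choose a step function $g_\varepsilon$ with finitely many values in $(\alpha,b]$ and $t-\varepsilon\le g_\varepsilon(t)\le t$ on $[a,b]$, and set $Y''=g_\varepsilon(A)$; then $Y''$ has finite spectrum, lies in $B_J(\mathcal H)$, and $\alpha<Y''\le A$, so Lemma~\ref{lemma: non-commutative convex combination} gives $C=(A-\alpha)^{-1/2}(Y''-\alpha)^{1/2}=h(A)$ and $D$ with $Y''=C^*AC+\alpha D^*D$ and $C^*C+D^*D=I$, where $h(t)=\bigl((g_\varepsilon(t)-\alpha)/(t-\alpha)\bigr)^{1/2}$ is bounded away from $0$, so $h(A)$ is invertible. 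The Jensen-type inequality and the finite-spectrum case yield
\[
f(Y'')=\Phi(Y'')\le C^*\Phi(A)C+f(\alpha)D^*D=h(A)\Phi(A)h(A)+f(\alpha)\bigl(I-h(A)^2\bigr),
\]
and conjugating by $h(A)^{-1}$ gives $\Phi(A)\ge\psi_\varepsilon(A)$ with $\psi_\varepsilon(t)=\bigl((t-\alpha)f(g_\varepsilon(t))-(t-g_\varepsilon(t))f(\alpha)\bigr)/(g_\varepsilon(t)-\alpha)\to f(t)$ uniformly on $[a,b]$. Hence $\Phi(A)\ge f(A)$, and combined with the upper bound, $\Phi(A)=f(A)$ for all $A\in B_J(\mathcal H)$. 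Finally, since $\Phi$ is convex and equals $f(\cdot)$, the inequality \eqref{5} holds for all self-adjoint $A,B$ on $\mathcal H$ with spectra in $J$; embedding an arbitrary finite-dimensional space isometrically into $\mathcal H$ and extending the given matrices by $\mu I$ for a fixed $\mu\in J$ shows that $f$ is $n$-convex for every $n$, i.e. operator convex. The only real obstacle is the lower bound; once one commits to choosing $C$ and $D$ as functions of $A$, all operator inequalities collapse to scalar ones that can be inverted and passed to the limit without any subtlety.
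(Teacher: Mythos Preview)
Your proof is correct and follows essentially the same route as the paper: reduce to finite-spectrum operators via Theorem~\ref{0}, sandwich $A$ by step-function approximants $Y\ge A$ and $Y''\le A$, invoke Lemma~\ref{lemma: non-commutative convex combination} to express the smaller operator as a ``non-commutative convex combination'' of the larger one and a scalar, and then apply the Jensen-type inequality in both directions. The only cosmetic difference is that you package the limiting step through explicit scalar functions $\phi_\varepsilon,\psi_\varepsilon$ of $A$ (using that $C,D$ are functions of $A$), whereas the paper simply observes that $C_n\to I$ and $D_n^*D_n\to 0$ in norm and passes to the limit directly; in particular the paper's lower bound does not need the conjugation by $h(A)^{-1}$, since $C_n^*\Phi(A)C_n+\Phi(\beta)D_n^*D_n\to\Phi(A)$ in norm already.
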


\begin{proof}
Take $ A\in \mathbb{B}_J(\mathcal H) $ and a constant $ \alpha\in J $ with $ \alpha< A. $ We may determine an upper sum operator $ Y_n=f_n(A) $ with spectrum in $ J $ by choosing  $ f_n $ as an increasing step function defined on the convex hull of the spectrum of $ A $ corresponding to a subdivision with fineness $ \varepsilon>0 $ such that $ f_n(t)= t $ in the right hand side of each subinterval. Then
\[
\alpha<A\le Y_n
\]
and $ \|Y_n-A\|\le\varepsilon $ such that $ Y_n $ converges to $ A $ in the norm topology as the fineness of the subdivision tends to zero. Furthermore, by Lemma~\ref{lemma: non-commutative convex combination}  we obtain
\[
A=C_n^*Y_nC_n+\alpha D_n^*D_n
\]
for operators $ C_n $ and $ D_n $ with $ C_n^*C_n+D_n^*D_n=I, $
and thus
\[
\Phi(A)\le C_n^*\Phi(Y_n)C_n+D_n^*\Phi(\alpha)D_n=C_n^*f(Y_n)C_n+f(\alpha)D_n^*D_n,
\]
where we first used that $ \Phi $ is of Jensen-type, and then that $ Y_n $ is a finite rank operator such that $ \Phi(Y_n)=f(Y_n). $ Notice that $ C_n $ by the spectral theorem converges to the identity operator in the norm topology, when the fineness of the subdivision tends to zero. In the limit we thus obtain $ \Phi(A)\le f(A). $ We next choose a constant $ \beta\in J $ such that
\[
\beta <Z_n\le A,
\]
where in this case $ Z_n=g_n(A) $ is an under sum operator of $ A $ with spectrum in $ J $ corresponding to a subdivision of $ J. $  We now obtain
\[
Z_n=C_n^*AC_n+\beta D_n^*D_n
\]
for operators $ C_n $ and $ D_n $ such that $ C_n^*C_n+D_n^*D_n=1, $ and $ C_n $ converges to the identity operator in the norm topology for $ n $ tending to infinity. By using that $ Y_n $ is finite rank and that $ \Phi $ is of Jensen-type we obtain the inequality
\[
f(Z_n)=\Phi(Z_n)\le C_n^*\Phi(A)C_n+\Phi(\beta)D_n^*D_n
\]
and thus in the limit $ f(A)\le\Phi(A). $
\end{proof}

Note that a Jensen-type map automatically is strongly continuous by the preceding theorem.

\begin{remark}
If $ \mathcal H $ is infinite dimensional and $ \Phi\colon B_J(\mathcal H)\to B(\mathcal H)_\text{sa} $ is unitarily invariant, we learned that the inequality
\[
\Phi(PAP+(I-P)B(I-P))\le P\Phi(A)P+(I-P)\Phi(B)(I-P)
\]
for all $ A,B\in B_J(\mathcal H) $ and projections $ P $ on $ \mathcal H$ is sufficient to conclude that $ \Phi $ is of the form $ \Phi(A)=f(A) $ for some operator convex function $ f, $ and it is therefore, by Theorem~\ref{HP},  of Jensen-type.

If $ \Phi $ is just unitarily invariant and convex, then the more restricted inequality
\[
\Phi(PAP+(I-P)A(I-P))\le P\Phi(A)P+(I-P)\Phi(A)(I-P)
\]
holds for $ A\in B_J(\mathcal H) $ and projections $ P $ on $ \mathcal H, $ cf. Proposition~\ref{main proposition}.

The difference between these two inequalities (the latter being more restricted than the former) elucidates the difference between the general class of unitarily invariant convex maps and the more restricted subset of Jensen-type maps.

 The map $ \Phi(X)=\tr X \cdot I $ is unitarily invariant and convex, but it is not of Jensen-type. To realize this, take $ A=P $ and $ B=0 $ in Definition~\ref{definition of Jensen-type}.
\end{remark}

\bibliographystyle{amsplain}

\begin{thebibliography}{99}


\bibitem{BHA} B. V. R. Bhat, \textit{Linear maps respecting unitary conjugation}, Banach J. Math. Anal. \textbf{5} (2011), no. 2, 1--5.

\bibitem{BS} J. Bendat and S. Sherman, \textit{Monotone and convex operator functions}, Trans. Amer. Math. Soc, \textbf{79} (1955), 58--71.

\bibitem{AFA} U. Franz and F. Hiai, \textit{Conic structure of the non-negative operator convex functions on $(0,\infty)$}, Ann. Funct. Anal. \textbf{5} (2014), no. 2, 158--175.

\bibitem{HP} F. Hansen and G. Pedersen, \textit{Jensen's inequality for operators and Lowner's theorem}, Math. Ann. \textbf{258} (1981/82), no. 3, 229--241.

\bibitem{HP2}
F. Hansen and G. K. Pedersen,  \textit{Jensen's operator inequality,} Bull. London Math. Soc., \textbf{35}(2003), 553-564.


\bibitem{KR} R. V. Kadison and J. R. Ringrose, \textit{Fundamentals of the theory of operator algebras}, vol. 1, Academic Press, London, 1983.

\bibitem{1} J. Pe\v{c}ari\'{c}, T. Furuta, J. Micic Hot, and Y. Seo, \textit{Mond-Pecaric method in operator inequalities, Inequalities for bounded selfadjoint operators on a Hilbert space}, Monographs in Inequalities, 1. ELEMENT, Zagreb, 2005.

\bibitem{RS} A. G. Robertson and R. R. Smith, \textit{Liftings and extensions of maps on $C^*$-algebras}, J. Operator Theory \textbf{21} (1989), no. 1, 117--131.


\end{thebibliography}

\end{document}